\newtheorem{theorem}{Theorem}[section]
\newtheorem{corollary}[theorem] {Corollary}
\newtheorem{definition}[theorem]{Definition}
\newtheorem{question}[theorem]{Question}
\title{This is the title}
\begin{document}
\hrule\hrule\hrule\hrule\hrule
\vspace{0.3cm}	
\begin{center}
{\bf{CONTINUOUS RANKIN BOUND FOR HILBERT AND BANACH SPACES}}\\
\vspace{0.3cm}
\hrule\hrule\hrule\hrule\hrule
\vspace{0.3cm}
\textbf{K. MAHESH KRISHNA}\\
Post Doctoral Fellow \\
Statistics and Mathematics Unit\\
Indian Statistical Institute, Bangalore Centre\\
Karnataka 560 059, India\\
Email:  kmaheshak@gmail.com\\

Date: \today
\end{center}

\hrule\hrule
\vspace{0.5cm}
\textbf{Abstract}:  Let $(\Omega, \mu)$ be a  measure space and $\{\tau_\alpha\}_{\alpha\in \Omega}$ be a 	normalized continuous Bessel  family for a real Hilbert  space $\mathcal{H}$.  If the diagonal $\Delta\coloneqq \{(\alpha, \alpha):\alpha \in \Omega\}$ is measurable in the measure space $\Omega\times \Omega$, then we show that 
\begin{align}\label{CRBA}
	\sup _{\alpha, \beta \in \Omega, \alpha\neq \beta}\langle \tau_\alpha, \tau_\beta\rangle \geq \frac{-(\mu\times\mu)(\Delta)}{(\mu\times\mu)((\Omega\times\Omega)\setminus\Delta)}.
\end{align}
We call Inequality (\ref{CRBA}) as continuous Rankin bound. It improves 76 years old result of Rankin [\textit{Ann. of Math.,  1947}].  It also answers one of the questions asked by K. M. Krishna in the paper [Continuous Welch bounds with applications, \textit{Commun. Korean Math. Soc., 2023}]. We also derive Banach space version of Inequality (\ref{CRBA}).

\textbf{Keywords}:    Rankin bound, Continuous Bessel family, Hilbert space, Banach space.

\textbf{Mathematics Subject Classification (2020)}: 42C15.\\

\hrule

\hrule
\section{Introduction}

In 1947, Rankin derived following result for a collection of unit vectors in $\mathbb{R}^d$. 
\begin{theorem} (\textbf{Rankin Bound}) \label{RB} \cite{TROPP, RANKIN, RANKIN2}
If $\{\tau_j\}_{j=1}^n$ is a collection of unit vectors in $\mathbb{R}^d$, then 
\begin{align}\label{RB1}
	\max _{1\leq j,k \leq n, j\neq k}\langle \tau_j, \tau_k\rangle \geq\frac{-1}{n-1}.
\end{align}
In particular, 
\begin{align}\label{RB2}
	\min _{1\leq j,k \leq n, j\neq k}\|\tau_j-\tau_k\|^2\leq 	\frac{2n}{n-1}.
\end{align}
\end{theorem}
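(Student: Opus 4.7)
The plan is to use the classical ``sum-of-squares is nonnegative'' trick applied to the vector $S\coloneqq \sum_{j=1}^{n}\tau_j$. First I would expand
\begin{align*}
0 \leq \left\| \sum_{j=1}^{n} \tau_j \right\|^2 = \sum_{j=1}^{n}\|\tau_j\|^2 + \sum_{\substack{1\leq j,k\leq n\\ j\neq k}} \langle \tau_j, \tau_k\rangle = n + \sum_{\substack{1\leq j,k\leq n\\ j\neq k}} \langle \tau_j, \tau_k\rangle,
\end{align*}
using that each $\tau_j$ is a unit vector. The key point is that this inequality does not use the dimension $d$ at all, so the bound will be intrinsic to the number of vectors.

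Next I would bound the double sum by the maximum of the inner products. Since there are exactly $n(n-1)$ ordered pairs $(j,k)$ with $j\neq k$,
\begin{align*}
\sum_{\substack{1\leq j,k\leq n\\ j\neq k}} \langle \tau_j, \tau_k\rangle \leq n(n-1) \max_{\substack{1\leq j,k\leq n\\ j\neq k}} \langle \tau_j, \tau_k\rangle.
\end{align*}
Combining with the previous display yields $n+n(n-1)\max_{j\neq k}\langle \tau_j,\tau_k\rangle\geq 0$, which rearranges to Inequality~(\ref{RB1}).

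For Inequality~(\ref{RB2}), I would use the polarization identity $\|\tau_j-\tau_k\|^2 = 2 - 2\langle \tau_j,\tau_k\rangle$, valid because both vectors are unit. Minimizing the left side corresponds to maximizing the inner product, so
\begin{align*}
\min_{\substack{1\leq j,k\leq n\\ j\neq k}} \|\tau_j-\tau_k\|^2 = 2 - 2\max_{\substack{1\leq j,k\leq n\\ j\neq k}} \langle \tau_j,\tau_k\rangle \leq 2 - 2\cdot\frac{-1}{n-1} = \frac{2n}{n-1},
\end{align*}
where the inequality uses (\ref{RB1}).

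I do not foresee a serious obstacle: the proof is a one-line energy inequality plus a counting argument. The only subtle point is the direction of the inequality in the packing reformulation, which must be tracked carefully because a lower bound on the maximum of $\langle \tau_j,\tau_k\rangle$ translates to an upper bound on the minimum of $\|\tau_j-\tau_k\|^2$; I would flag this sign check as the most error-prone step.
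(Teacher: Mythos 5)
Your proof is correct and is essentially the paper's argument: the paper derives this theorem from its continuous Rankin bound by taking $\Omega=\{1,\dots,n\}$ with counting measure, and the proof of that continuous bound is exactly your expansion of $0\leq\bigl\|\sum_j\tau_j\bigr\|^2$ into diagonal and off-diagonal terms followed by bounding the off-diagonal sum by $n(n-1)$ times its maximum. The passage from (\ref{RB1}) to (\ref{RB2}) via $\|\tau_j-\tau_k\|^2=2-2\langle\tau_j,\tau_k\rangle$ also matches the paper.
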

Striking feature of Inequalities (\ref{RB1}) and  (\ref{RB2}) is that they do not depend upon the dimension $d$.  Inequalities (\ref{RB1}) and  (\ref{RB2})  play important roles in the study of  packings of lines (which  motivated to  study  the packings of planes) \cite{DHILLONHEATHSTROHMER, CONWAYHARDINSLOANE}, Kepler conjecture \cite{HALES, SZPIRO},  sphere packings \cite{BERNDTKOHNENONO, ZONG} and the geometry of numbers \cite{CASSELS}. 

After the derivation of continuous Welch bounds in most general form, author of the paper  \cite{KRISHNA} asked what is the version of Rankin bound for collections indexed by measure spaces. We are going to answer this in this paper.

\section{Continuous Rankin bound}
We start by recalling the notion of  continuous frames which are introduced independently by Ali, Antoine and Gazeau \cite{ALIANTOINEGAZEAU} and Kaiser \cite{KAISER}. In the paper,    $\mathcal{H}$ denotes a real Hilbert space (need not be finite dimensional).
\begin{definition}\cite{ALIANTOINEGAZEAU, KAISER, ALIANTOINEGAZEAUBOOK, GABARDOHAN}
	Let 	$(\Omega, \mu)$ be a measure space. A collection   $\{\tau_\alpha\}_{\alpha\in \Omega}$ in 	a  Hilbert  space $\mathcal{H}$ is said to be a \textbf{continuous frame} (or generalized frame) for $\mathcal{H}$ if the following holds.
	\begin{enumerate}[\upshape(i)]
		\item For each $h \in \mathcal{H}$, the map $\Omega \ni \alpha \mapsto \langle h, \tau_\alpha \rangle \in \mathbb{K}$ is measurable.
		\item There are $a,b>0$ such that 
		\begin{align*}
			a\|h\|^2\leq \int\limits_{\Omega}|\langle h, \tau_\alpha \rangle|^2\,d\mu(\alpha)\leq b \|h\|^2, \quad \forall h \in \mathcal{H}.
		\end{align*}
	\end{enumerate}
 If we do not demand the first inequality in (ii), then we say it is a \textbf{continuous Bessel family} for $\mathcal{H}$. A continuous Bessel family $\{\tau_\alpha\}_{\alpha\in \Omega}$ is said to be normalized or unit norm if $\|\tau_\alpha\|=1$, $\forall \alpha \in \Omega$.
\end{definition}
 Given a continuous Bessel family, the analysis operator
\begin{align*}
	\theta_\tau:\mathcal{H} \ni h \mapsto \theta_\tau h \in \mathcal{L}^2(\Omega);  \quad \theta_\tau h:\Omega \ni \alpha \mapsto  \langle h, \tau_\alpha \rangle \in \mathbb{K}
\end{align*}
is a well-defined bounded linear operator. Its adjoint, the synthesis operator is given by 
\begin{align*}
	\theta_\tau^*:\mathcal{L}^2(\Omega)\ni f \mapsto \int\limits_{\Omega}f (\alpha)\tau_\alpha \,d\mu(\alpha)\in \mathcal{H}.
\end{align*}
By combining analysis and synthesis operators, we get the frame operator, defined as 
\begin{align*}
	S_\tau\coloneqq 	\theta_\tau^*	\theta_\tau:\mathcal{H} \ni h \mapsto\int\limits_{\Omega}\langle h, \tau_\alpha \rangle \tau_\alpha \,d\mu(\alpha)\in \mathcal{H}.
\end{align*}
Note that the integrals are weak integrals (Pettis integrals \cite{TALAGRAND}). With  this machinery, we generalizes Theorem \ref{RB}.
\begin{theorem} (\textbf{Continuous Rankin Bound})  \label{CRB}
 Let $(\Omega, \mu)$ be a  measure space and $\{\tau_\alpha\}_{\alpha\in \Omega}$ be a 	normalized continuous Bessel  family for  a real Hilbert space $\mathcal{H}$. If the diagonal $\Delta\coloneqq \{(\alpha, \alpha):\alpha \in \Omega\}$ is measurable in the measure space $\Omega\times \Omega$, then
 \begin{align}\label{CD0}
 	\sup _{\alpha, \beta \in \Omega, \alpha\neq \beta}\langle \tau_\alpha, \tau_\beta\rangle \geq \frac{-(\mu\times\mu)(\Delta)}{(\mu\times\mu)((\Omega\times\Omega)\setminus\Delta)}.	
 \end{align}
 In particular, 
 \begin{align}\label{CD}
 	\inf _{\alpha, \beta \in \Omega, \alpha\neq \beta}\|\tau_\alpha-\tau_\beta\|^2\leq 2 \left(1+\frac{(\mu\times\mu)(\Delta)}{(\mu\times\mu)((\Omega\times\Omega)\setminus\Delta)}	\right).
 \end{align}
\end{theorem}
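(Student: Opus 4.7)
The plan is to imitate the classical argument for Rankin's bound (Theorem \ref{RB}), with the finite sum $\sum_{j=1}^n \tau_j$ replaced by the Pettis integral $\int_\Omega \tau_\alpha\,d\mu(\alpha)$. Concretely, I would apply the synthesis operator to the constant function $\chi_\Omega\equiv 1$; for this to make sense one implicitly needs $\mu(\Omega)<\infty$ so that $\chi_\Omega\in\mathcal{L}^2(\Omega)$ (otherwise the right-hand side of (\ref{CD0}) already degenerates). Setting $v:=\theta_\tau^*\chi_\Omega=\int_\Omega\tau_\alpha\,d\mu(\alpha)\in\mathcal{H}$, non-negativity of the norm gives $0\le\|v\|^2$, which will be the only analytic input.

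I would next expand $\|v\|^2=\langle v,v\rangle$ by pulling the inner product through the weak integral defining $v$ (twice), producing
\[
\|v\|^2 \;=\; \int_\Omega\!\int_\Omega \langle \tau_\alpha,\tau_\beta\rangle\,d\mu(\alpha)\,d\mu(\beta).
\]
The normalization gives $|\langle\tau_\alpha,\tau_\beta\rangle|\le 1$, and since $(\mu\times\mu)(\Omega\times\Omega)<\infty$ the integrand is integrable, so Fubini lets us view the right-hand side as a single integral over $\Omega\times\Omega$. Using the assumed measurability of $\Delta$, I would split the domain as $\Delta\sqcup((\Omega\times\Omega)\setminus\Delta)$. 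On $\Delta$ the integrand equals $\|\tau_\alpha\|^2=1$ and contributes exactly $(\mu\times\mu)(\Delta)$; on the complement it is majorized by $S:=\sup_{\alpha\neq\beta}\langle\tau_\alpha,\tau_\beta\rangle$. Combining,
\[
0 \;\le\; (\mu\times\mu)(\Delta) + S\cdot(\mu\times\mu)((\Omega\times\Omega)\setminus\Delta),
\]
and rearranging yields (\ref{CD0}). Inequality (\ref{CD}) is then an immediate consequence of the polarization identity $\|\tau_\alpha-\tau_\beta\|^2=2-2\langle\tau_\alpha,\tau_\beta\rangle$ combined with the bound just derived for $S$.

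The argument is conceptually very short; the main obstacle is purely measure-theoretic bookkeeping. One has to justify the two applications of the weak (Pettis) integral to move $\langle\cdot,\cdot\rangle$ inside, confirm that $(\alpha,\beta)\mapsto\langle\tau_\alpha,\tau_\beta\rangle$ is jointly measurable on $\Omega\times\Omega$ (which is precisely where the measurability hypothesis on $\Delta$ does its work, together with the Bessel measurability of each $\alpha\mapsto\langle h,\tau_\alpha\rangle$), and verify that Fubini applies. Once these routine points are handled, the inequality follows mechanically.
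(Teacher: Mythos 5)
Your proposal is correct and follows essentially the same route as the paper: apply the synthesis operator to $\chi_\Omega$ (using $\mu(\Omega)<\infty$, which the paper gets from the normalized Bessel hypothesis via a cited lemma), expand $0\le\|\theta_\tau^*\chi_\Omega\|^2$ into a double integral, split over $\Delta$ and its complement, and bound the off-diagonal part by the supremum; the passage to the distance bound via $\|\tau_\alpha-\tau_\beta\|^2=2-2\langle\tau_\alpha,\tau_\beta\rangle$ is also identical.
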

\begin{proof}
Since $\mu(\Omega)<\infty$ (see lemma 2.3 in \cite{KRISHNA}), $\chi_\Omega \in \mathcal{L}^2(\Omega)$ and 
\begin{align*}
\int\limits_{(\Omega\times\Omega)\setminus\Delta}|\langle \tau_\alpha, \tau_\beta\rangle|\, d(\mu\times\mu)(\alpha,\beta)\leq 	\int\limits_{(\Omega\times\Omega)\setminus\Delta}\| \tau_\alpha\|\|\tau_\beta\|\, d(\mu\times\mu)(\alpha,\beta)=(\mu\times\mu)((\Omega\times\Omega)\setminus\Delta)<\infty.
\end{align*}
Now by using Fubini's theorem, we get

\begin{align*}
	0&\leq \|\theta^*_\tau \chi_\Omega\|^2=\langle \theta^*_\tau \chi_\Omega, \theta^*_\tau \chi_\Omega\rangle =\left\langle \int\limits_{\Omega}\chi_\Omega (\alpha)\tau_\alpha \,d\mu(\alpha), \int\limits_{\Omega} \chi_\Omega (\beta)\tau_\beta \,d\mu(\beta)\right\rangle\\
	&=\left\langle \int\limits_{\Omega}\tau_\alpha \,d\mu(\alpha), \int\limits_{\Omega} \tau_\beta \,d\mu(\beta)\right\rangle = \int\limits_{\Omega} \int\limits_{\Omega}\langle \tau_\alpha, \tau_\beta\rangle\,d\mu(\alpha)\,d\mu(\beta)=\int\limits_{\Omega\times\Omega}\langle \tau_\alpha, \tau_\beta\rangle\, d(\mu\times\mu)(\alpha,\beta)\\
		&=\int\limits_{\Delta}\langle \tau_\alpha, \tau_\beta\rangle\, d(\mu\times\mu)(\alpha,\beta)+\int\limits_{(\Omega\times\Omega)\setminus\Delta}\langle \tau_\alpha, \tau_\beta\rangle\, d(\mu\times\mu)(\alpha,\beta)\\
	&=\int\limits_{\Delta}\langle \tau_\alpha, \tau_\alpha\rangle\, d(\mu\times\mu)(\alpha,\beta)+\int\limits_{(\Omega\times\Omega)\setminus\Delta}\langle \tau_\alpha, \tau_\beta\rangle\, d(\mu\times\mu)(\alpha,\beta)\\
	&=(\mu\times\mu)(\Delta)+\int\limits_{(\Omega\times\Omega)\setminus\Delta}\langle \tau_\alpha, \tau_\beta\rangle\, d(\mu\times\mu)(\alpha,\beta)\\
	&\leq(\mu\times\mu)(\Delta)+\left(\sup _{\alpha, \beta \in \Omega, \alpha\neq \beta}\langle \tau_\alpha, \tau_\beta\rangle \right)(\mu\times\mu)((\Omega\times\Omega)\setminus\Delta).
\end{align*}
Now writing inner product using norm,  we get 
\begin{align*}
\sup _{\alpha, \beta \in \Omega, \alpha\neq \beta}\langle \tau_\alpha, \tau_\beta\rangle&=\sup _{\alpha, \beta \in \Omega, \alpha\neq \beta}\left(\frac{\| \tau_\alpha\|^2+\| \tau_\beta\|^2-\|\tau_\alpha-\tau_\beta\|^2}{2}\right)	=\sup _{\alpha, \beta \in \Omega, \alpha\neq \beta}\left(\frac{2-\|\tau_\alpha-\tau_\beta\|^2}{2}\right)	\\
&=1-\frac{\inf _{\alpha, \beta \in \Omega, \alpha\neq \beta}\|\tau_\alpha-\tau_\beta\|^2}{2}.
\end{align*}
Therefore 
\begin{align*}
1-\frac{\inf _{\alpha, \beta \in \Omega, \alpha\neq \beta}\|\tau_\alpha-\tau_\beta\|^2}{2}\geq 	\frac{-(\mu\times\mu)(\Delta)}{(\mu\times\mu)((\Omega\times\Omega)\setminus\Delta)}
\end{align*}
which gives 
\begin{align*}
\frac{\inf _{\alpha, \beta \in \Omega, \alpha\neq \beta}\|\tau_\alpha-\tau_\beta\|^2}{2}\leq 1+\frac{(\mu\times\mu)(\Delta)}{(\mu\times\mu)((\Omega\times\Omega)\setminus\Delta)}.	
\end{align*}
\end{proof}
\begin{corollary}
	Theorem  \ref{RB}    follows from  Theorem 	\ref{CRB}.
\end{corollary}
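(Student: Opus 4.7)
The plan is to obtain Theorem \ref{RB} as the special case of Theorem \ref{CRB} where $(\Omega, \mu)$ is a finite set equipped with the counting measure. Specifically, I would take $\Omega = \{1, 2, \dots, n\}$ and let $\mu$ be the counting measure, so that for any family $\{\tau_j\}_{j=1}^n$ of unit vectors in $\mathbb{R}^d$, the indexed collection $\{\tau_\alpha\}_{\alpha \in \Omega}$ is exactly $\{\tau_j\}_{j=1}^n$. The first step is then to verify that this is a normalized continuous Bessel family in the sense of the paper's definition.

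Measurability of the map $\alpha \mapsto \langle h, \tau_\alpha\rangle$ is automatic since $\Omega$ is finite and every subset is measurable (so, in particular, $\Delta = \{(j,j) : 1 \le j \le n\}$ is measurable in $\Omega \times \Omega$). For the Bessel bound, Cauchy--Schwarz gives $|\langle h, \tau_j\rangle|^2 \le \|h\|^2$ for each $j$, hence $\int_\Omega |\langle h, \tau_\alpha\rangle|^2\, d\mu(\alpha) = \sum_{j=1}^n |\langle h, \tau_j\rangle|^2 \le n \|h\|^2$, so the Bessel condition holds with $b = n$. Normalization is immediate from the hypothesis $\|\tau_j\| = 1$.

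Next I would compute the two product-measure quantities that appear on the right-hand side of (\ref{CD0}). Since $\mu$ is counting measure on an $n$-point set, $\mu \times \mu$ is counting measure on $\Omega \times \Omega$, giving $(\mu \times \mu)(\Delta) = n$ and $(\mu \times \mu)((\Omega \times \Omega) \setminus \Delta) = n^2 - n = n(n-1)$. Substituting into Theorem \ref{CRB},
\begin{align*}
\sup_{\substack{\alpha, \beta \in \Omega \\ \alpha \ne \beta}} \langle \tau_\alpha, \tau_\beta\rangle \;\ge\; \frac{-n}{n(n-1)} \;=\; \frac{-1}{n-1},
\end{align*}
and since the supremum is over a finite set it is attained, yielding (\ref{RB1}). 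Likewise, (\ref{CD}) gives
\begin{align*}
\inf_{\substack{\alpha, \beta \in \Omega \\ \alpha \ne \beta}} \|\tau_\alpha - \tau_\beta\|^2 \;\le\; 2\left(1 + \frac{n}{n(n-1)}\right) = \frac{2n}{n-1},
\end{align*}
which is (\ref{RB2}), so Theorem \ref{RB} follows.

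There is no real obstacle here; the only points requiring care are the verification of the Bessel hypothesis for the counting-measure setup and the correct bookkeeping of $(\mu \times \mu)(\Delta)$ versus $(\mu \times \mu)((\Omega \times \Omega) \setminus \Delta)$. Once these two ingredients are in place, both conclusions of Theorem \ref{RB} drop out as a direct specialization of Theorem \ref{CRB}.
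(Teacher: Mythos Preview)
Your proposal is correct and follows exactly the paper's approach: specialize Theorem \ref{CRB} to $\Omega=\{1,\dots,n\}$ with counting measure. The paper's own proof is a one-line remark to this effect, and your additional verification of the Bessel hypothesis and computation of $(\mu\times\mu)(\Delta)=n$, $(\mu\times\mu)((\Omega\times\Omega)\setminus\Delta)=n(n-1)$ simply spells out the details.
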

\begin{proof}
	Take $\Omega=\{1,\dots,n\} $ and $\mu$ as the counting measure.
\end{proof}
A remarkable feature of Inequality (\ref{CD0}) is that it allows to derive  Inequality (\ref{CD}). We can not do this  by using   first order continuous Welch bound \cite{KRISHNA}.

Given a   measure space  $(\Omega, \mu)$  with measurable diagonal and a 	normalized continuous Bessel  family $\{\tau_\alpha\}_{\alpha\in \Omega}$  for  a real Hilbert space $\mathcal{H}$, we define 
\begin{align*}
\mathcal{M}(\{\tau_\alpha\}_{\alpha\in \Omega})\coloneqq \sup _{\alpha, \beta \in \Omega, \alpha\neq \beta}\langle \tau_\alpha, \tau_\beta\rangle
\end{align*}
and 
\begin{align*}
\mathcal{N}(\{\tau_\alpha\}_{\alpha\in \Omega})\coloneqq	\inf _{\alpha, \beta \in \Omega, \alpha\neq \beta}\|\tau_\alpha-\tau_\beta\|^2.
\end{align*}
Similar to the problem of Grassmannian frames (see \cite{STROHMERHEATH}), we propose following problem.
\begin{question}
\textbf{Given a   measure space  $(\Omega, \mu)$  with measurable diagonal and a  real Hilbert space $\mathcal{H}$, find 		normalized continuous Bessel  family $\{\tau_\alpha\}_{\alpha\in \Omega}$  for   $\mathcal{H}$, such that 
\begin{align}\label{CRP}
\mathcal{M}(\{\tau_\alpha\}_{\alpha\in \Omega})	=\inf\left\{\mathcal{M}(\{\omega_\alpha\}_{\alpha\in \Omega}) : \{\omega_\alpha\}_{\alpha\in \Omega} \text{ is a normalized continuous Bessel  family for } \mathcal{H}\right\}. 
\end{align}
Equivalently, find 	normalized continuous Bessel  family $\{\tau_\alpha\}_{\alpha\in \Omega}$  for   $\mathcal{H}$, such that 
\begin{align*}
\mathcal{N}(\{\tau_\alpha\}_{\alpha\in \Omega})	=\sup\left\{\mathcal{N}(\{\omega_\alpha\}_{\alpha\in \Omega}) : \{\omega_\alpha\}_{\alpha\in \Omega} \text{ is a normalized continuous Bessel  family for } \mathcal{H}\right\}. 	
\end{align*}
Further, for which measure spaces  $(\Omega, \mu)$ and real Hilbert spaces $\mathcal{H}$, solution to (\ref{CRP}) exists?}
\end{question}

\section{Continuous Rankin bound for Banach spaces}
In this section, we derive continuous Rankin bound for Banach spaces. First we need a notion.
\begin{definition}\cite{KRISHNA2}
	Let $(\Omega, \mu)	$ be a measure space  and $p\in[1, \infty)$. Let $\{\tau_\alpha\}_{\alpha\in \Omega}$ be a collection  in a Banach space  $\mathcal{X}$ and 	$\{f_\alpha\}_{\alpha\in \Omega}$ be a collection in  $\mathcal{X}^*$. The pair $ (\{f_\alpha \}_{\alpha \in \Omega}, \{\tau_\alpha \}_{\alpha \in \Omega}) $ is said to be a \textbf{continuous p-Bessel family} for $\mathcal{X}$ if the following conditions are satisfied.
	\begin{enumerate}[\upshape(i)]
		\item For each $x \in \mathcal{X}$, the map $\Omega \ni \alpha \mapsto  f_\alpha(x)\in \mathbb{K}$ is measurable. 
		\item For each $u \in \mathcal{L}^p(\Omega, \mu)$, the map $\Omega \ni \alpha \mapsto  u(\alpha)\tau_\alpha \in \mathcal{X}$ is measurable. 
		\item The map  (\textbf{continuous analysis operator})
		\begin{align*}
			\theta_f: \mathcal{X}\ni x \mapsto \theta_f  \in \mathcal{L}^p(\Omega, \mu); \quad \theta_f x: \Omega \ni \alpha \mapsto (\theta_f x)(\alpha)\coloneqq f_\alpha(x)\in \mathbb{K} 
		\end{align*}
		is a well-defined bounded linear  operator.
		\item The map  (\textbf{continuous synthesis operator})
		\begin{align*}
			\theta_\tau : \mathcal{L}^p(\Omega, \mu)\ni u \mapsto \theta_\tau u\coloneqq \int\limits_\Omega u(\alpha)\tau_\alpha \,d\mu(\alpha)\in \mathcal{X}	
		\end{align*}
		is a well-defined bounded linear  operator.
	\end{enumerate}
\end{definition}
\begin{theorem}(\textbf{Functional Continuous Rankin Bound})
Let $(\Omega, \mu)$ be a  finite measure space and $(\{f_\alpha\}_{\alpha\in \Omega}, \{\tau_\alpha\}_{\alpha\in \Omega})$ be a 	 continuous p-approximate Bessel family for  a real Banach  space $\mathcal{X}$ satisfying the following. 
\begin{enumerate}[\upshape(i)]
	\item $f_\alpha(\tau_\alpha)=1$ for all $\alpha \in \Omega$.
	\item  $\|f_\alpha\|\leq 1$, $\|\tau_\alpha\|\leq 1$ for all $1\leq \alpha \in \Omega$.
	\item $\theta_f \theta_\tau \chi_\Omega \geq 0$.
\end{enumerate}
 If the diagonal $\Delta\coloneqq \{(\alpha, \alpha):\alpha \in \Omega\}$ is measurable in the measure space $\Omega\times \Omega$, then
\begin{align*}
	\sup _{\alpha, \beta \in \Omega, \alpha\neq \beta}f_\alpha(\tau_\beta) \geq \frac{-(\mu\times\mu)(\Delta)}{(\mu\times\mu)((\Omega\times\Omega)\setminus\Delta)}.	
\end{align*}
\end{theorem}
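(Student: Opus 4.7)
The plan is to imitate the proof of Theorem \ref{CRB} line by line, with the sole structural change being the replacement of the scalar $\|\theta_\tau^{*}\chi_\Omega\|^{2}\ge 0$ (which used the inner product) by the scalar $\int_\Omega(\theta_f\theta_\tau\chi_\Omega)(\alpha)\,d\mu(\alpha)\ge 0$. Hypothesis (iii) is precisely what guarantees this non-negativity in the absence of an inner product, and it is the Banach analogue of the tautological positivity of a squared norm.

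Concretely, I would first observe that finiteness of $\mu$ puts $\chi_\Omega$ into $\mathcal{L}^{p}(\Omega,\mu)$, so $\theta_\tau\chi_\Omega=\int_\Omega\tau_\beta\,d\mu(\beta)$ is a well-defined vector in $\mathcal{X}$, and the Pettis interpretation gives $(\theta_f\theta_\tau\chi_\Omega)(\alpha)=f_\alpha(\theta_\tau\chi_\Omega)=\int_\Omega f_\alpha(\tau_\beta)\,d\mu(\beta)$. By (iii), this is non-negative, and Hölder together with $\mu(\Omega)<\infty$ lets me integrate it over $\Omega$ to obtain
\begin{align*}
0\le \int_\Omega\int_\Omega f_\alpha(\tau_\beta)\,d\mu(\beta)\,d\mu(\alpha).
\end{align*}
Hypothesis (ii) supplies $|f_\alpha(\tau_\beta)|\le 1$, so the integrand is bounded on the finite measure space $\Omega\times\Omega$ and Fubini applies, after which I split the product integral into its restriction to $\Delta$ and to $(\Omega\times\Omega)\setminus\Delta$. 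On $\Delta$, (i) collapses the integrand to $1$, contributing $(\mu\times\mu)(\Delta)$; on the complement, the integrand is controlled by the supremum. Rearranging the inequality
\begin{align*}
0\le (\mu\times\mu)(\Delta)+\Bigl(\sup_{\alpha\ne\beta}f_\alpha(\tau_\beta)\Bigr)\,(\mu\times\mu)\bigl((\Omega\times\Omega)\setminus\Delta\bigr)
\end{align*}
yields the claim.

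The step I expect to cause the most friction is justifying the application of Fubini, since the definition of a continuous p-Bessel family provides only separate measurability of $\alpha\mapsto f_\alpha(x)$ and of $\alpha\mapsto u(\alpha)\tau_\alpha$, not joint measurability of $(\alpha,\beta)\mapsto f_\alpha(\tau_\beta)$. This is the same subtlety that was swept under the rug in the Hilbert space proof for $(\alpha,\beta)\mapsto\langle\tau_\alpha,\tau_\beta\rangle$, and the measurability of $\Delta$ in the hypothesis is what licenses the split. Beyond this technicality the argument is a direct transcription, and hypothesis (iii) does all the work that positivity of a norm squared did in the Hilbert case.
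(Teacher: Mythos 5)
Your proposal matches the paper's proof essentially line for line: both start from $0\le\int_\Omega(\theta_f\theta_\tau\chi_\Omega)(\alpha)\,d\mu(\alpha)$ as licensed by hypothesis (iii), unwind $\theta_f\theta_\tau\chi_\Omega$ via the Pettis integral, apply Fubini on the finite product measure space, split over $\Delta$ and its complement using (i) and (ii), and rearrange. Your remarks on the role of (iii) as the substitute for positivity of the squared norm and on the joint-measurability subtlety in Fubini are accurate observations about the same argument the paper gives.
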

\begin{proof}
Since $\mu(\Omega)<\infty$, we have
\begin{align*}
	\int\limits_{(\Omega\times\Omega)\setminus\Delta}|f_\alpha(\tau_\beta)|\, d(\mu\times\mu)(\alpha,\beta)\leq 	\int\limits_{(\Omega\times\Omega)\setminus\Delta}\| f_\alpha\|\|\tau_\beta\|\, d(\mu\times\mu)(\alpha,\beta)\leq (\mu\times\mu)((\Omega\times\Omega)\setminus\Delta)<\infty.
\end{align*}	
Now by using Fubini's theorem, we get 	
\begin{align*}
0&\leq \int\limits_{\Omega}(\theta_f \theta_\tau \chi_\Omega)(\alpha)\,d\mu(\alpha)	= \int\limits_{\Omega}f_\alpha (\theta_\tau \chi_\Omega)\,d\mu(\alpha)=\int\limits_{\Omega}f_\alpha \left(\int\limits_{\Omega}\chi_\Omega(\beta)\tau_\beta \,d\mu(\alpha)\right)\,d\mu(\alpha)\\
&=\int\limits_{\Omega}f_\alpha \left(\int\limits_{\Omega}\tau_\beta \,d\mu(\beta)\right)\,d\mu(\alpha)=\int\limits_{\Omega} \int\limits_{\Omega}f_\alpha(\tau_\beta) \,d\mu(\beta)\,d\mu(\alpha)=\int\limits_{\Omega\times\Omega}f_\alpha(\tau_\beta)\, d(\mu\times\mu)(\alpha,\beta)\\
&=\int\limits_{\Delta}f_\alpha(\tau_\beta)\, d(\mu\times\mu)(\alpha,\beta)+\int\limits_{(\Omega\times\Omega)\setminus\Delta}f_\alpha(\tau_\beta)\, d(\mu\times\mu)(\alpha,\beta)\\
&=\int\limits_{\Delta}f_\alpha(\tau_\alpha)\, d(\mu\times\mu)(\alpha,\beta)+\int\limits_{(\Omega\times\Omega)\setminus\Delta}f_\alpha(\tau_\beta)\, d(\mu\times\mu)(\alpha,\beta)\\
&=(\mu\times\mu)(\Delta)+\int\limits_{(\Omega\times\Omega)\setminus\Delta}f_\alpha(\tau_\beta)\, d(\mu\times\mu)(\alpha,\beta)\\
&\leq(\mu\times\mu)(\Delta)+\left(\sup _{\alpha, \beta \in \Omega, \alpha\neq \beta}f_\alpha(\tau_\beta) \right)(\mu\times\mu)((\Omega\times\Omega)\setminus\Delta).
\end{align*}
\end{proof}
\begin{corollary}
Let $\{\tau_j\}_{j=1}^n$ be  a collection in a real Banach space $\mathcal{X}$	and $\{f_j\}_{j=1}^n$ be  a collection in $\mathcal{X}^*$ satisfying the following. 
\begin{enumerate}[\upshape(i)]
	\item $f_j(\tau_j)=1$ for all $1 \leq j \leq n$.
	\item  $\|f_j\|\leq 1$, $\|\tau_j\|\leq 1$ for all $1 \leq j \leq n$.
	\item $\sum_{1\leq j, k \leq n}f_j(\tau_k)\geq 0$. 
\end{enumerate}
Then 
\begin{align*}
		\max _{1\leq j,k \leq n, j\neq k}f_j(\tau_k) \geq\frac{-1}{n-1}.
\end{align*}
\end{corollary}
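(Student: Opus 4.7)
The plan is to specialize the Functional Continuous Rankin Bound to a discrete setting. Take $\Omega = \{1,\dots,n\}$ with $\mu$ the counting measure. Then $(\Omega,\mu)$ is a finite measure space, every subset (in particular the diagonal $\Delta$) is measurable in $\Omega\times\Omega$, and a direct count yields $(\mu\times\mu)(\Delta)=n$ while $(\mu\times\mu)((\Omega\times\Omega)\setminus\Delta)=n^2-n=n(n-1)$. Substituting into the bound from the theorem gives precisely $-n/(n(n-1))=-1/(n-1)$, which is the target inequality.

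Next I would verify that the hypotheses (i), (ii), (iii) of the functional theorem are met by the collections $\{f_j\}_{j=1}^n$ and $\{\tau_j\}_{j=1}^n$. Conditions (i) and (ii) are identical in the two statements. For (iii), in the discrete setting one has $\theta_\tau\chi_\Omega=\sum_{k=1}^n\tau_k$ and hence $(\theta_f\theta_\tau\chi_\Omega)(j)=f_j\bigl(\sum_{k}\tau_k\bigr)=\sum_{k=1}^n f_j(\tau_k)$. Integrating this function against $\mu$ simply sums it over $j$, producing $\sum_{1\le j,k\le n}f_j(\tau_k)$, which is assumed non-negative by hypothesis (iii) of the corollary.

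This is the one point that deserves care: the theorem as written demands $\theta_f\theta_\tau\chi_\Omega\ge 0$, whereas the corollary only hypothesizes the weaker statement that the total sum is non-negative. Inspecting the proof of the theorem, the condition $\theta_f\theta_\tau\chi_\Omega\ge 0$ is only invoked through the chain starting $0\le\int_\Omega(\theta_f\theta_\tau\chi_\Omega)(\alpha)\,d\mu(\alpha)$, so in fact only the integrated version is needed. Therefore the discrete hypothesis (iii) suffices to run the same argument, and the conclusion follows.

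The proof is essentially routine specialization; there is no genuine obstacle beyond the small bookkeeping remark above about how (iii) of the corollary corresponds to the integrated form of (iii) of the theorem. In a clean write-up I would either state the corollary as an immediate consequence once this identification is pointed out, or, to keep the logical flow tight, reproduce the one-line computation of $(\mu\times\mu)(\Delta)$ and $(\mu\times\mu)((\Omega\times\Omega)\setminus\Delta)$ for the counting measure and then quote the theorem.
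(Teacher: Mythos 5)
Your specialization to $\Omega=\{1,\dots,n\}$ with counting measure is correct and is exactly the route the paper intends (it is the same one-line reduction used for the Hilbert-space corollary; the paper gives no separate proof here), and the computation $(\mu\times\mu)(\Delta)=n$, $(\mu\times\mu)((\Omega\times\Omega)\setminus\Delta)=n(n-1)$ yields the stated bound $-1/(n-1)$. Your remark about hypothesis (iii) is well taken and correct: the corollary only assumes the summed condition $\sum_{j,k}f_j(\tau_k)\geq 0$ rather than the pointwise condition $\theta_f\theta_\tau\chi_\Omega\geq 0$ demanded by the theorem, but since the theorem's proof only ever uses $\int_\Omega(\theta_f\theta_\tau\chi_\Omega)(\alpha)\,d\mu(\alpha)\geq 0$, the weaker hypothesis suffices and the argument goes through.
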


 \bibliographystyle{plain}
 \bibliography{reference.bib}

\end{document}